\theoremstyle{plain}
\newtheorem{theorem}{Theorem}
\newtheorem{lemma}[theorem]{Lemma}
\newtheorem{proposition}[theorem]{Proposition}
\newtheorem*{remark}{Remark}
\newcommand{\crit}{{\rm Crt}}
\newcommand\E{{\mathbb E}}
\newcommand\X{X(\cdot)}
\newcommand{\var}{{\rm Var}}
\newcommand\R{{\mathbb R}}
\newcommand\C{{\mathbb C}}
\newcommand{\indicator}{\mathbf{1}}
\begin{document}

\begin{frontmatter}

\title{Landscape $k$-complexity of isotropic centered Gaussian fields}

\runtitle{Landscape $k$-complexity of isotropic centered Gaussian fields}

\begin{aug}
\author[A]{\fnms{Jean-Marc}~\snm{Aza\"{\i}s}\ead[label=e1]{ jean-marc.azais@math.univ-toulouse.fr}}
\and
\author[C]{\fnms{C\'eline }~\snm{Delmas}\ead[label=e3]{celine.delmas.toulouse@inrae.fr}}
\address[A]{IMT, Universit\'e de Toulouse, France.
\printead[presep={,\ }]{e1}}


\address[C]{MIAT, INRAE,  Universit\'e de Toulouse, France.
\printead[presep={,\ }]{e3}}
\end{aug}

\begin{abstract}
In large dimension, we study the asymptotic behavior of the mean number of critical points with index $k$ below a level $u$ for an isotropic centered Gaussian random field defined on a family of subsets of $\R^d$ depending  on $d$.
We prove the existence of three regimes depending on the speed of growth  of the volume of the parameter set. In the first regime 
the mean number of critical points  decreases exponentially with the dimension.
For the second regime, there exists a critical level $u_c$ such that the mean number of critical points with index $k$ below a level $u$ with $u>u_c$ increases exponentially with the dimension $d$ independently of the index $k$
and decreases exponentially with $d$ when $u<u_c$. In the third regime,
there exists a layered structure depending on the level $u$ considered and on the index $k$ of the critical points.  This behavior is similar  to  the one  encountered  on the sphere by  Auffinger et al. (2013).
In the particular case of the Bargmann-Fock field, only two regimes coexist.
\end{abstract}



\begin{keyword}
Complexity \sep Critical points \sep Gaussian fields \sep High dimension
\kwd{Complexity}
\kwd{Critical points}
\kwd{Gaussian fields}
\kwd{High dimension}

\end{keyword}

\end{frontmatter}




\section{Introduction}
This note studies the asymptotic behavior  in large dimension of the mean number of critical points with index $k$ below a level $u$. This for  centered Gaussian isotropic  random fields defined on a  family  of subsets $T_d$ of $\R^d$ whose volume $|T_d|$ is logarithmically comparable
to that of a ball as $d$ tends to infinity. More precisely, we assume that
$$
\frac{1}{d}\log \left(\frac{|T_d|}{\kappa_d}\right) \rightarrow V\in \R \mbox{ as }d\rightarrow +\infty,
$$
where $\kappa_d$ denotes the volume of the unit ball in $\R^d$.
$T_d$ may be, for example, the ball of radius $R$ in $\R^d$ or the square $\displaystyle [0,L/\sqrt{d}]^d$ with $0<L,R<+\infty$.
In those cases $\displaystyle R:=\exp(V)$ and $\displaystyle L:=\sqrt{2\pi}\exp(V+1/2)$.
We prove that there exists a critical value $V=V^1_c$, or equivalently a critical radius $R^1_c$ for the ball of radius $R$ or a critical side length $L^1_c$ for the square $[0,L/\sqrt{d}]^d$, such that  the mean number of critical points with index $k$ on $T_d$
decreases exponentially with $d$ when $V<V^1_c$ ($R<R^1_c$ or $L<L^1_c$), is logarithmically comparable to zero when $V=V^1_c$ ($R=R^1_c$ or $L=L^1_c$)
and increases exponentially with $d$ when $V>V^1_c$ ($R>R^1_c$ or $L>L^1_c$).
We obtain
$$
V_c^1=\log \left(\frac{\exp(1) \sqrt{3\lambda_2}}{\sqrt{\lambda_4}}\right)\mbox{, }\hspace{0.5cm} R^1_c=\frac{\exp(1) \sqrt{3\lambda_2}}{\sqrt{\lambda_4}} \hspace{0.5cm} \mbox{ and } \hspace{0.5cm} L^1_c=\exp(3/2)\sqrt{\frac{6\lambda_2 \pi}{\lambda_4}},
$$
where $\lambda_2$ and $\lambda_4$ are respectively the second and fourth spectral moments of the Gaussian field. When $V>V^1_c$, there also exist another critical value $V_c^2>V_c^1$ such that
when $V_c^1<V<V_c^2$ there exist a critical level $v_c$ for which the mean number of critical points with index $k$ below the level $u=v\sqrt{d}$ on $T_d$, $\E(\crit_{u\downarrow}^k(T_d))$,
decreases exponentially with $d$ when $v<v_c$ and increases exponentially with $d$ when $v>v_c$. This behaviour is independent on the index $k$.
When $V>V_c^2$ there exist critical levels $v_c^{0}<v_c^1<v_c^2<\ldots$ for which the mean number
of critical points with index $k$ below the level $u=v\sqrt{d}$ on $T_d$ decreases exponentially with $d$ when $v<v_c^{k}$ and increases exponentially with $d$ when $v>v_c^{k}$.
Thus there exists a layered structure in the distribution of the critical points: there will be approximatively no critical points below $u_0=v_c^0\sqrt{d}$, only critical points with index 0 between $u_0$ and $u_1=v_c^1\sqrt{d}$, only critical points with indexes 0 and 1
between $u_1$ and $u_2=v_c^2\sqrt{d}$ and so on.
We have, as $d\to \infty$,
$$
\E(\crit_{u\downarrow}^k(T_d))\approx \exp(d\times \Theta_k(V, v)),
$$
where $ \approx$ means equivalence of logarithms and $ \Theta_k(V, v)$ is the landscape $k$-complexity of the Gaussian random field on $T_d$. The logarihmic behavior of $\E(\crit_{u\downarrow}^k(T_d))$ as a function of $d$ is related to the sign of $\Theta_k(V, v)$.

Our study is analogous to that of \cite{ABAC} and  \cite{ABA} that considers the landscape $k$-complexity of  centered isotropic Gaussian random fields defined on  the high-dimensional sphere. The particularity of our study is that we consider  Gaussian
random fields defined on ``flat'' subsets $T_d$ of $\R^d$. We prove the existence of two critical values $V_c^1$ and $V_c^2$ for the volume of $T_d$ corresponding to three different regimes for the behavior of $\E(\crit_{u\downarrow}^k(T_d))$.
These critical values do not appear in the study of \cite{ABAC} and \cite{ABA}. Our main tools are the Kac-Rice formula and the Large Deviations Principle for the $k$th smallest eigenvalue of a GOE-matrix, as in \cite{ABAC} and \cite{ABA}.
The classical papers devoted to the ``flat case'', see \cite{F}, \cite{BA}, \cite{AZ},  use a quadratic penalty.
Our approach  is in some sense similar to those approaches.  For example, the quadratic penalty used in \cite{F}  can be interpreted as a constraint that limits the parameters to be in a sphere.  But precise equations  are different, as well as they are different from the ``round case'' of \cite{ABAC} and \cite{ABA}. So, it is worth establishing these equations in this short note.  In addition, papers such as \cite{F} and \cite{BA} do not specify the index.
 
  Our main result is  Theorem  \ref{t:gege:plat:general}. It describes the logarithmic  behavior of the number of critical points as a function of the level. In contrast to \cite{ABAC} and \cite{ABA}, there is no distinction between ``pure-like'' and general   models. The three behaviors can occur depending on the size of the set. The Bargmann-Fock field corresponds to the limit case $V_c^1=V_c^2$. Therefore only two regimes coexist for the behavior of $\E(\crit_{u\downarrow}^k(T_d))$ depending on the volume of $T_d$.

 \section{Assumptions, notations and background}
In this paper $\X = \{X(t) : t \in  \R^d \}$ is a real-valued centered stationary and isotropic Gaussian field with unit variance.
Its covariance function is defined by
 \begin{equation}\label{e:statio:2}
 \E\big( X(s), X(t)\big)=c(s,t)=\mathbf r (\|s-t\| ^2) .
\end{equation}

$\mathbf r$ is assumed to be $\C ^2$. The spectral moments $\lambda_0$, $\lambda_{2}$ and $\lambda_4$ are defined by
\begin{equation}
1= \var(X(t)) = \mathbf r(0),
\lambda_{2n}:=\var\left(\frac{\partial^{n}X(t)}{\partial t_{\ell}^n}\right)= (-1)^n\frac{(2n)!}{n!}\mathbf r^{(n)}(0)  \mbox{ with } \ell=1,\ldots, d \mbox{ and } n=1,2. \label{critical:spectralmoments}
\end{equation}

The covariance function $\mathbf r (\cdot)$ is assumed to  define a valid covariance in any dimension. It will be called    a ``Schoenberg covariance''. Proposition \ref{calc4:p:Schoenberg_plat} below gives a charaterization of such covariances.
 
\begin{proposition} [\cite{schoenberg1938}]\label {calc4:p:Schoenberg_plat}
 A covariance  function defined by \eqref{e:statio:2} is a Schoenberg covariance  with   $\mathbf r(0) =1$   if and only if
  there exists a probability measure  $G$ on $[0, +\infty)$
such that
\begin{equation}\label{e:shon}
\mathbf r(x) = \int_0^{+\infty} e^{-xw} G({\rm d}w)  ~~\mbox{ for all } x
\geq 0.
\end{equation}
\end{proposition}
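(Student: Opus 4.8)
The statement is Schoenberg's classical characterisation, and the plan is to assemble it from two ingredients: Schoenberg's theorem (positive definiteness of $\mathbf r(\|\cdot\|^2)$ in \emph{every} dimension is equivalent to complete monotonicity of $\mathbf r$) and the Bernstein--Widder theorem (complete monotonicity is equivalent to being the Laplace transform of a nonnegative measure).

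For the \textbf{sufficiency} direction I would argue directly. Write $c(s,t)=\int_0^{+\infty}e^{-w\|s-t\|^2}\,G({\rm d}w)$; it then suffices to show that, for each fixed $w\ge 0$ and each $d$, the Gaussian kernel $K_w(s,t):=e^{-w\|s-t\|^2}$ is positive semidefinite on $\R^d$, and to integrate. For $w=0$ this is the constant kernel $1$; for $w>0$ I would use the factorisation $K_w(s,t)=\big(e^{-w\|s\|^2}e^{-w\|t\|^2}\big)\,e^{2w\langle s,t\rangle}$, observe that $e^{2w\langle s,t\rangle}=\sum_{k\ge 0}\frac{(2w)^k}{k!}\langle s,t\rangle^k$ is positive semidefinite as a convergent series of Hadamard powers of the positive semidefinite kernel $\langle s,t\rangle$, and that multiplication by the rank-one factor $e^{-w\|s\|^2}e^{-w\|t\|^2}$ preserves positive semidefiniteness (Schur product theorem). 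Then for any $t_1,\dots,t_n\in\R^d$ and $a_1,\dots,a_n\in\R$ one gets $\sum_{i,j}a_ia_j c(t_i,t_j)=\int_0^{+\infty}\!\big(\sum_{i,j}a_ia_j K_w(t_i,t_j)\big)\,G({\rm d}w)\ge 0$. Setting $s=t$ gives $\mathbf r(0)=\int_0^{+\infty}G({\rm d}w)$, which equals $1$ exactly because $G$ is a probability measure.

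For the \textbf{necessity} direction, assume $\mathbf r(\|s-t\|^2)$ is a valid covariance on $\R^d$ for all $d$. The plan is: (i) show $\mathbf r$ is completely monotone on $(0,+\infty)$, i.e. $\mathbf r\in C^\infty$ with $(-1)^n\mathbf r^{(n)}\ge 0$ for every $n$; (ii) invoke Bernstein--Widder, yielding $\mathbf r(x)=\int_{[0,+\infty)}e^{-xw}\,G({\rm d}w)$ for a nonnegative Borel measure $G$, which is finite because $\mathbf r(0)<\infty$ (monotone convergence); (iii) read off $G([0,+\infty))=\mathbf r(0)=1$, so $G$ is a probability measure.

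Step (i) is where essentially all the content sits, and it is the main obstacle: one must convert ``positive definite in every dimension'' into a pointwise analytic property of the single function $\mathbf r$. The route I would take: by isotropy and Bochner's theorem, for each $d$ the normalised covariance $x\mapsto\mathbf r(\|x\|^2)$ is the Fourier transform of a rotation-invariant probability measure $\mu_d$ on $\R^d$, and restricting the field to a coordinate subspace shows that the family $(\mu_d)_{d\ge 1}$ is consistent under projection. One then applies the classical characterisation of rotatable sequences (a consistent rotation-invariant family of probability measures on the spaces $\R^d$ must be a scale mixture of centred Gaussians), producing a probability measure $G$ on $[0,+\infty)$ with $\mu_d=\int_0^{+\infty}\mathcal N(0,2wI_d)\,G({\rm d}w)$; taking Fourier transforms is exactly \eqref{e:shon}, so this route in fact delivers (ii)--(iii) simultaneously. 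An alternative, closer to Schoenberg's original argument, is to realise prescribed squared-distance patterns by finite point configurations in sufficiently high dimension and let $d\to\infty$; nonnegativity of all the associated quadratic forms then forces the finite-difference inequalities $(-1)^n\Delta_h^n\mathbf r\ge 0$, which is complete monotonicity, after which Bernstein--Widder closes the proof. Since the result is precisely Schoenberg (1938), for this note it suffices to cite it, but the above is the self-contained path.
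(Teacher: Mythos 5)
The paper does not prove this proposition; it is stated as a classical result and attributed to Schoenberg (1938), so there is no in-text argument to compare yours against. Your outline is a correct and standard reconstruction. The sufficiency direction is complete as written: the factorisation $e^{-w\|s-t\|^2}=e^{-w\|s\|^2}e^{-w\|t\|^2}e^{2w\langle s,t\rangle}$, the Schur product theorem applied to the Hadamard powers of the Gram kernel $\langle s,t\rangle$, and the closure of positive semidefiniteness under mixing by a nonnegative measure together give a valid covariance in every dimension, with $\mathbf r(0)=G([0,+\infty))=1$. For necessity you correctly locate the entire difficulty in passing from ``positive definite in every dimension'' to complete monotonicity (or directly to the Gaussian scale-mixture form), and both routes you sketch --- the consistency of the rotation-invariant spectral measures $\mu_d$ under projection followed by the characterisation of such families as scale mixtures of centred Gaussians, or the finite point configurations in $\R^n$ forcing $(-1)^n\Delta_h^n\mathbf r\geq 0$ followed by Bernstein--Widder --- are legitimate and are essentially the two classical proofs. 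One caveat worth making explicit: as stated, the equivalence requires continuity of $\mathbf r$ (the function $\mathbf r=\indicator_{\{0\}}$ yields a valid isotropic covariance in every dimension with $\mathbf r(0)=1$ but is not the Laplace transform of a probability measure on $[0,+\infty)$); this is harmless here since the paper assumes $\mathbf r\in\C^2$, but your Bochner step and the Bernstein--Widder step both silently use it. Since the result is exactly Schoenberg's theorem, the citation the paper gives is the appropriate ``proof,'' and your sketch would only be needed if one wanted the note to be self-contained.
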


A Schoenberg covariance satisfies $\lambda_4\geq 3\lambda_2^2$ when these quantities are well defined (see \cite{AW2}). Equality holds if and only if
$\displaystyle \mathbf r(t) =e^{-at}$ for some $a >0$. In that case $\displaystyle \E \left( X(0)X(t)\right)=e^{-a\|t\|^2}$. This random field is named ``Bargmann-Fock field''.
In this paper, we also assume that the paths  of  $\X$  are $\C^2$   and $\lambda_2>0$.
Then by \cite[Proposition 2.2 and Section 7.1]{aal} the sample paths are almost surely Morse and this implies that the number of critical points with
index $k$  in a Borel set $T$,  ${{\crit}^k}(T)$,  is well defined. Note that the recent work of \cite{aal} implies that the conditions that appear in \cite{adlertaylor} or \cite{AW-book} are in fact not necessary
for the sample paths to be almost surely Morse.
We recall that the index is the number of negative eigenvalues of the Hessian denoted here by $i(X''(t))$.
More precisely, we define

\begin{equation}\label{critical:nombre1}
{{\crit}^k}(T):=\#\{t\in T : X'(t)=0,\ i(X''(t))=k \}.
\end{equation}
We define also the number of  critical points of index $k$ below the level $u$ by
\begin{equation}\label{critical:nombre3}
{\crit}^k_{u\downarrow}(T):=\#\{t\in T :  X'(t)=0,\ i(X''(t))=k, \ X(t)<u\}.
\end{equation}

First we recall the following proposition, \cite[Proposition 4]{AD2022}, giving the exact expression of the mean number of critical points with index $k$ below a level $u$ on a set $T$.

\begin{proposition}\label{critical:p:NPC}
Let  $L_p$  be the $p$th ordered eigenvalue of a $(d+1)$-GOE matrix ($L_1\leq L_2\leq \ldots \leq L_{d+1}$). Under assumptions above,
\begin{equation}\label{critical:eqNck}
\E\left({{\crit}^k}(T)\right) =  \frac{\sqrt{2}|T|}{\pi^{(d+1)/2}} \left( \frac{\lambda_4}{3\lambda_2}\right)^{d/2} \Gamma \left( \frac{d+1}{2}\right)\E \left( \exp\left(-\frac{L^2_{k+1}}{2}\right)\right).
\end{equation}
When $\displaystyle \lambda_4-3\lambda_2^2>0$,
\begin{equation}\label{critical:eqNcku}
\E \left ( {{\crit}_{u\downarrow}^k}(T)\right) =\frac{\sqrt{2}|T|}{\pi^{(d+1)/2}} \left( \frac{\lambda_4}{3\lambda_2}\right)^{d/2} \Gamma \left( \frac{d+1}{2}\right)
\times \E \left\{\exp\left(-\frac{L_{k+1}^2}{2}\right){\Phi}\left[\sqrt{\frac{\lambda_4}{\lambda_4-3\lambda_2^2}}\left(u-L_{k+1}\frac{\sqrt{6}\lambda_2}{\sqrt{\lambda_4}}\right)\right]\right\},
\end{equation}
and when $\displaystyle \lambda_4-3\lambda_2^2=0$
\begin{equation}\label{critical:eqNckup}
\E \left({{\crit}_{u\downarrow}^k}(T)\right)=\frac{\sqrt{2}|T|\lambda_2^{d/2}}{\pi^{(d+1)/2}}   \Gamma \left( \frac{d+1}{2}\right)    \E \left( \exp\left(-\frac{L_{k+1}^2}{2}\right) \indicator_{L_{k+1}\leq u/\sqrt{2}} \right).
\end{equation}
\end{proposition}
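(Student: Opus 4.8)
The plan is to derive \eqref{critical:eqNck}--\eqref{critical:eqNckup} from the Kac--Rice formula for the first factorial moment of a point process, which is legitimate here because the paths are almost surely Morse. I would start from
\begin{equation*}
\E\left({\crit}^k_{u\downarrow}(T)\right)=\int_T \E\left[\,\bigl|\det X''(t)\bigr|\,\mathbf{1}_{\{i(X''(t))=k\}}\,\mathbf{1}_{\{X(t)<u\}}\ \Big|\ X'(t)=0\right] p_{X'(t)}(0)\,dt ,
\end{equation*}
with the obvious variant (no $\mathbf{1}_{\{X(t)<u\}}$) for $\E({\crit}^k(T))$. Stationarity and isotropy make the integrand constant in $t$, so everything reduces to $|T|$ times its value at $t=0$, and the problem becomes a purely finite--dimensional Gaussian computation involving $\bigl(X(0),X'(0),X''(0)\bigr)$.

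First I would work out this joint law from $\mathbf r$ and \eqref{critical:spectralmoments}: differentiating \eqref{e:statio:2} and using parity, $X'(0)\sim\mathcal N(0,\lambda_2 I_d)$ is independent of $\bigl(X(0),X''(0)\bigr)$, so the conditioning on $X'(0)=0$ is vacuous and $p_{X'(0)}(0)=(2\pi\lambda_2)^{-d/2}$, while $X(0)$ has unit variance and the Hessian has the isotropic structure $\var(X''_{ii}(0))=\lambda_4$, $\var(X''_{ij}(0))=\cov(X''_{ii}(0),X''_{jj}(0))=\lambda_4/3$ ($i\ne j$), the remaining covariances among Hessian entries vanishing, and $\cov(X''_{ij}(0),X(0))=-\lambda_2\delta_{ij}$. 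I would then record the key structural fact that one may realise $X''(0)=\sqrt{\lambda_4/3}\,M+\xi I_d$, where $M$ is a standard GOE matrix of size $d$ (independent centred Gaussian entries, $\var(M_{ii})=2$, $\var(M_{ij})=1$) independent of a centred bivariate Gaussian vector $(\xi,X(0))$ with $\var(\xi)=\lambda_4/3$, $\var(X(0))=1$, $\cov(\xi,X(0))=-\lambda_2$; a one-line check of second moments reproduces the law above, and the inequality $\lambda_4\ge3\lambda_2^2$ recalled after Proposition~\ref{calc4:p:Schoenberg_plat} is exactly what makes $\var(X(0)\mid\xi)=(\lambda_4-3\lambda_2^2)/\lambda_4$ nonnegative.

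Next I would exploit that the eigenvalues of $X''(0)$ are $\sqrt{\lambda_4/3}\,\ell_p(M)+\xi$, so $\{i(X''(0))=k\}$ says that exactly $k$ of the $\ell_p(M)$ lie below $c:=-\xi\sqrt{3/\lambda_4}$, while $|\det X''(0)|=(\lambda_4/3)^{d/2}\prod_p|\ell_p(M)-c|$. Conditioning on $(M,\xi)$ and integrating out $X(0)$ produces the factor $\Phi\bigl(\sqrt{\lambda_4/(\lambda_4-3\lambda_2^2)}\,(u-c\sqrt3\,\lambda_2/\sqrt{\lambda_4})\bigr)$ when $\lambda_4-3\lambda_2^2>0$, and, since $X(0)=-\xi/\lambda_2$ is deterministic when $\lambda_4=3\lambda_2^2$, the factor $\mathbf{1}_{\{c<u\}}$ in the Bargmann--Fock case; after the rescaling $c=\sqrt2\,L_{k+1}$ these are precisely the arguments in \eqref{critical:eqNcku}--\eqref{critical:eqNckup}. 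Since $\xi\sim\mathcal N(0,\lambda_4/3)$ the variable $c$ is standard normal and independent of $M$, so what is left is an expectation of the form $(\lambda_4/3)^{d/2}\,\E\bigl[\prod_p|\ell_p(M)-c|\,\mathbf{1}_{\{\#\{p:\ell_p(M)<c\}=k\}}\,g(c)\bigr]$, and here the $(d+1)$-GOE enters through the classical Vandermonde-completion identity: writing the joint eigenvalue density of $M$, the extra weight $\prod_p|\ell_p(M)-c|$ turns the Vandermonde of $\ell_1(M),\dots,\ell_d(M)$ into that of $\ell_1(M),\dots,\ell_d(M),c$, and splitting $e^{-c^2/2}$ as $e^{-c^2/4}\cdot e^{-c^2/4}$ — one factor completing the size-$(d+1)$ GOE density, the other surviving as $e^{-(\sqrt2L_{k+1})^2/4}=e^{-L_{k+1}^2/2}$ — recasts the whole thing, up to the explicit ratio $Z_{d+1}/Z_d$ of GOE normalising constants, as an expectation over a $(d+1)$-GOE, with the index constraint forcing the inserted point $c$ to be the $(k+1)$-th smallest eigenvalue. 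Collecting the prefactors $(2\pi\lambda_2)^{-d/2}$, $(\lambda_4/3)^{d/2}$ and $Z_{d+1}/Z_d$ — the last supplying $\Gamma((d+1)/2)$ and the power of $\pi$ — would give \eqref{critical:eqNcku} and \eqref{critical:eqNckup}, and \eqref{critical:eqNck} is the $g\equiv1$ (i.e.\ $u=+\infty$) case.

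The routine part is the Gaussian bookkeeping of the first two paragraphs. The hard part will be the last step: proving the $d\to d+1$ GOE identity with exact constants, i.e.\ keeping track of $Z_{d+1}/Z_d$, of the $\sqrt2$ rescaling between the matrix $M$ and the eigenvalue variable $L_{k+1}$, and of the combinatorics that converts $\{\#\{p:\ell_p(M)<c\}=k\}$ into ``$L_{k+1}$'' in the larger matrix without leaving spurious factors of $k!$ or $(d-k)!$. (This statement is Proposition~4 of \cite{AD2022}, where the details are carried out.)
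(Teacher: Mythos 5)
Your outline is correct and follows the same route as the source the paper relies on: the paper does not prove this proposition itself but recalls it verbatim from \cite{AD2022} (Proposition~4), and the derivation there is exactly the Kac--Rice reduction plus the decomposition $X''(0)=\sqrt{\lambda_4/3}\,M+\xi I_d$ with the Vandermonde completion to a $(d+1)$-GOE that you describe. All the checkable details in your sketch are right: the covariance structure $\var(X''_{ii})=\lambda_4$, $\cov(X''_{ii},X''_{jj})=\var(X''_{ij})=\lambda_4/3$, $\cov(X''_{ij},X(0))=-\lambda_2\delta_{ij}$; the fact that $c=-\xi\sqrt{3/\lambda_4}$ is standard normal and independent of $M$; the conditional law of $X(0)$ given $\xi$ producing the $\Phi$ factor (and the indicator $\indicator_{\{c<u\}}$, i.e.\ $L_{k+1}\le u/\sqrt2$, in the degenerate Bargmann--Fock case); and the $\sqrt2$ rescaling reconciling the weight $e^{-c^2/4}$ with $e^{-L_{k+1}^2/2}$. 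The only piece you leave open --- the exact ratio of Selberg/Mehta normalising constants that yields $\sqrt2\,\pi^{-(d+1)/2}\Gamma\bigl(\tfrac{d+1}{2}\bigr)$ and rules out spurious combinatorial factors --- is precisely what is carried out in the cited reference, so as a blind reconstruction this is the intended argument, presented at the level of a sketch rather than a complete proof.
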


\begin{remark}
This kind of result was first obtained in the spherical case by \cite{ABA} and \cite{ABAC}.
This result can also be found  in  \cite[Proposition 3.9]{cheng2} written in a slightly different way.
\end{remark}

\section{Logarithmic asymptotics}\label{critical:s:logarithmic_asymptotics_planar}

In this section, we consider the logarithmic asymptotic behavior of $\displaystyle \E \left({\crit}^k_{u\downarrow}(T_d)\right)$ as $d\to \infty$, where $T_d$ is a family of  Borel sets. Roughly speaking, we obtain the same kind of results as in \cite{ABA} and \cite{ABAC}  when the volume of the sets  $T_d$  is logarithmically comparable to that of a ball.  More precisely we assume:
\begin{equation}\label{critical:condition_S}
\frac{1}{d}\log \left(\frac{|T_d|}{\kappa_d}\right) \rightarrow  V \in \R \mbox{ when } d\rightarrow +\infty,
\end{equation}
where $\displaystyle \kappa_d=\frac{\pi^{d/2}}{\Gamma \left(\frac{d}{2}+1\right)}$ denotes the  volume  of the unit ball in $\R^{d}$. $T_d$ may be, for example, the ball of radius $R$ in $\R^d$ or the square $[0,L/\sqrt{d}]^d$ with $0<L<+\infty$.
In these cases $R$ and $L$ are given by  
\begin{equation}\label{def:RetL}
R:=\exp (V)\hspace{0.5cm}\mbox{ and }\hspace{0.5cm}L:=\exp(V+1/2) \sqrt{2\pi}
\end{equation}

We set $\displaystyle \tilde{L}_{k+1}:=\frac{L_{k+1}}{\sqrt{d+1}}$ for $k=0,\ldots d$.
The Large Deviations Principle (LDP) for the $(k+1)$ smallest eigenvalue of a $d$-$GOE$ matrix obtained in \cite[Theorem A.9]{ABAC}, formulated below as a lemma, is the main tool for the results of this section.

\begin{lemma}\label{critical:LDP}
Let $L_1\leq L_2\leq \ldots \leq L_{d+1}$ be the ordered eigenvalues of a $(d+1)$-$GOE$ matrix. We set $\displaystyle \tilde{L}_{k+1}:=\frac{L_{k+1}}{\sqrt{d+1}}$, then
$\displaystyle \tilde{L}_{k+1}$ satisfies the LDP with speed $(d+1)$ and good rate function
\begin{equation}\label{critical:def:J_k}
J_k(x)=(k+1)I(-x),
\end{equation}
with
\begin{equation}\label{def:I1}
I(x):=\left\{\begin{array}{lr}
\displaystyle \int_{\sqrt{2}}^x \sqrt{z^2-2}~dz, & if x\geq \sqrt{2},\\
\displaystyle +\infty,& otherwise.\\
\end{array}
\right.
\end{equation}
\end{lemma}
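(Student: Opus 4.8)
\emph{Sketch of the argument.} This is precisely \cite[Theorem A.9]{ABAC}, so one option is simply to invoke it; let me instead indicate how I would prove it. The plan is to reduce the statement to the classical large-deviation asymptotics for the \emph{largest} eigenvalue of the GOE and then pass from the extreme eigenvalue to the $(k+1)$th one by a counting argument. First I would use the reflection symmetry of the GOE: if $M$ is a $(d+1)$-GOE matrix, then so is $-M$, and the $(k+1)$th smallest eigenvalue of $M$ equals $-1$ times the $(k+1)$th largest eigenvalue of $-M$. Hence, writing $N=d+1$ and letting $\tilde\mu_{k+1}$ denote the $(k+1)$th largest eigenvalue divided by $\sqrt N$, the variable $\tilde L_{k+1}$ has the law of $-\tilde\mu_{k+1}$; since $x\mapsto -x$ is a homeomorphism of $\R$, by the contraction principle it suffices to prove that $\tilde\mu_{k+1}$ obeys the LDP at speed $N$ with good rate function $x\mapsto (k+1)I(x)$.

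For the single eigenvalue ($k=0$) this is the classical large-deviation principle for the top eigenvalue of the GOE, and I would recall the mechanism from the explicit joint law of the ordered rescaled eigenvalues $\tilde L_i=L_i/\sqrt N$, which on $\{\tilde L_1\le\cdots\le\tilde L_N\}$ has, in a suitable normalization, density proportional to $\prod_{i<j}|\tilde L_i-\tilde L_j|\exp(-\tfrac N2\sum_i\tilde L_i^2)$, chosen so that the empirical spectral distribution converges to the semicircle law $\mu_{\rm sc}$ on $[-\sqrt2,\sqrt2]$. Freezing the bulk at its semicircular profile and placing one eigenvalue at $x>\sqrt2$ costs, in the exponent, the effective potential $\tfrac12x^2-\int\log|x-y|\,\mu_{\rm sc}(dy)$ minus its (constant) value on $[-\sqrt2,\sqrt2]$; differentiating and using that the Stieltjes transform of $\mu_{\rm sc}$ equals $x-\sqrt{x^2-2}$ for $x>\sqrt2$ gives derivative $\sqrt{x^2-2}$, so the cost is exactly $I(x)=\int_{\sqrt2}^x\sqrt{z^2-2}\,dz$; below $\sqrt2$ the probability decays at the faster speed $N^2$ (a ``pushing'' deviation of the whole spectrum), which is why the speed-$N$ rate function is $+\infty$ there.

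To go from $k=0$ to general $k$, I would note that $\{\tilde\mu_{k+1}\ge x\}$ forces at least $k+1$ of the rescaled eigenvalues to exceed $x$. For the upper bound, a union bound over the $\binom N{k+1}$ choices of which eigenvalues sit above $x$, combined with the density above, shows that the exponential cost of having $k+1$ eigenvalues clustered near a common location $x>\sqrt2$ is $(k+1)I(x)+o(N)$: each outlier pays $I(x)$ through its self-energy and its interaction with the (essentially unchanged) semicircular bulk, while the $\binom{k+1}2$ mutual Vandermonde factors among the $O(1)$ outliers and the prefactor $\binom N{k+1}$ are subexponential in $N$, and clustering the outliers only lowers the probability. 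For the matching lower bound I would restrict to the event that $k+1$ eigenvalues lie in a small window $(x,x+\varepsilon)$ while the rest form an approximate semicircle, whose probability is $\exp(-(k+1)I(x+\varepsilon)N+o(N))$, and let $\varepsilon\downarrow0$. Exponential tightness follows from the Gaussian tails of the matrix entries; together with a standard continuity argument in the window size this upgrades the $\delta$-neighbourhood estimates to a full LDP with rate function $(k+1)I$. Since $I$ is continuous and strictly increasing on $[\sqrt2,\infty)$ with $I(\sqrt2)=0$ and $I(x)\to+\infty$, the sublevel sets of $J_k=(k+1)I(-\cdot)$ are compact, so $J_k$ is good, and the symmetrization of the first paragraph finishes the proof.

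I expect the main obstacle to be the second inequality in the last step: making rigorous that the cheapest realisation of $\{\tilde\mu_{k+1}\approx x\}$ is to pull out \emph{exactly} $k+1$ eigenvalues to $x$, so that the rate is genuinely $(k+1)I(x)$ and not smaller. This requires controlling the joint large-deviation cost of several simultaneous outliers together with the Vandermonde and entropy contributions of the bulk, uniformly in the locations of the outliers; the clean route is to rely on the estimates already assembled in \cite[Appendix A]{ABAC}.
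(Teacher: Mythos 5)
The paper offers no proof of this lemma at all: it is imported verbatim as \cite[Theorem A.9]{ABAC}, which is exactly the first option you name, so your proposal matches the paper's approach. Your accompanying sketch (symmetrization $M\mapsto -M$ plus the contraction principle, the one-outlier effective potential whose derivative is $x-G_{\mu_{\rm sc}}(x)=\sqrt{x^2-2}$, and the $(k+1)$-outlier counting argument with the hard matching lower bound deferred to \cite[Appendix A]{ABAC}) is a faithful outline of the cited proof and contains no errors.
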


\subsection{Main result}
We assume $\displaystyle \lambda_4-3\lambda_2^2 > 0$ with the condition~\eqref{critical:condition_S} on the set $T_d$.
We set
\begin{equation}
v:=\frac{u}{\sqrt{d+1}}, \hspace{1cm} E_{\star}:=-\left(\frac{\lambda_4+3\lambda_2^2}{\lambda_2\sqrt{3\lambda_4}}\right), \hspace{1cm}
E_{\star \star}:=-\frac{2\sqrt{3}\lambda_2}{\sqrt{\lambda_4}}. \label{def:E}
\end{equation}
When $E_{\star}^2-(k+1)^2(E_{\star \star}-E_{\star})^2 \neq 0$, we define
\begin{equation}\label{def:x}
x_{k,v}:=\frac{-\sqrt{2}vE_{\star}+\sqrt{2}(k+1)(E_{\star \star}-E_{\star})\sqrt{(k+1)^2(E_{\star \star}-E_{\star})^2+(v^2-E_{\star}^2)}}{E_{\star}^2-(k+1)^2(E_{\star \star}-E_{\star})^2},
\end{equation}
and when $E_{\star}^2-(k+1)^2(E_{\star \star}-E_{\star})^2 = 0$,
\begin{equation}\label{def:xbix}
x_{k,v}:=-\frac{E_{\star}^2+v^2}{\sqrt{2}vE_{\star}}.
\end{equation}
\begin{equation}\label{def:thetastar}
\displaystyle \Theta^{\star}_k(V,v):=
\left \{
\begin{array}{l}
\displaystyle V+\frac{1}{2}\log \left(\frac{\lambda_4}{3\lambda_2}\right)-1 \mbox{ }\mbox{ if }v\geq E_{\star \star}\\
\displaystyle V+\frac{1}{2}\log \left(\frac{\lambda_4}{3\lambda_2}\right)-\left(1-\frac{(v-E_{\star \star})^2}{E_{\star \star}(E_{\star \star}-E_{\star})}\right) \mbox{ } \mbox{ if } E_{\star}\leq v\leq E_{\star \star}\\
\displaystyle V+\frac{1}{2}\log \left(\frac{\lambda_4}{3\lambda_2}\right)-\left(\frac{x_{k,v}^2}{2}-\frac{\left(v+x_{k,v}\frac{E_{\star \star}}{\sqrt{2}}\right)^2}{E_{\star \star}(E_{\star \star}-E_{\star})}\right)-J_k(x_{k,v}) \mbox{ }\mbox{ if } v\leq E_{\star }
\end{array}
\right.
\end{equation}
where the function $J_k$ is defined in~\eqref{critical:def:J_k}.

\begin{theorem} \label{t:gege:plat:general}
Under assumptions above, we have
\begin{eqnarray*}
\lim_{d\rightarrow +\infty}\frac{1}{d}\log \E\left(\crit^k(T_d)\right)&=&\Theta^{\star}(V):=V+\frac{1}{2}\log \left(\frac{\lambda_4}{3\lambda_2}\right)-1 \mbox{ }\mbox{ and}\\
\lim_{d\rightarrow +\infty}\frac{1}{d}\log \E\left(\crit^k_{u\downarrow}(T_d)\right)&=&\Theta^{\star}_k(V,v).
\end{eqnarray*}
\end{theorem}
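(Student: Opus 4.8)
\emph{Proof plan.} The plan is to start from the exact formulas of Proposition~\ref{critical:p:NPC}, factor out the deterministic prefactor $P_d:=\tfrac{\sqrt2\,|T_d|}{\pi^{(d+1)/2}}\big(\tfrac{\lambda_4}{3\lambda_2}\big)^{d/2}\Gamma\big(\tfrac{d+1}{2}\big)$, and handle the remaining GOE-expectation by a Laplace/Varadhan computation resting on the large deviations principle of Lemma~\ref{critical:LDP}. For the prefactor I would substitute $|T_d|=\kappa_d\exp(d(V+o(1)))$ from~\eqref{critical:condition_S} together with $\kappa_d=\pi^{d/2}/\Gamma(d/2+1)$; the powers of $\pi$ cancel, and Stirling gives $\Gamma((d+1)/2)/\Gamma(d/2+1)=O(d^{-1/2})$, so that $\tfrac1d\log P_d\to V+\tfrac12\log\big(\tfrac{\lambda_4}{3\lambda_2}\big)$. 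The statement about $\E(\crit^k(T_d))$ is the $v=+\infty$ instance of the statement about $\E(\crit^k_{u\downarrow}(T_d))$ (the Gaussian-cdf factor in~\eqref{critical:eqNcku} then being absent), so it is enough to carry out the latter and to observe that its optimisation degenerates to the first line of $\Theta^\star_k$.

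For the expectation I would put $\tilde L_{k+1}:=L_{k+1}/\sqrt{d+1}$ and $v:=u/\sqrt{d+1}$. Using the identity $\tfrac{\sqrt6\lambda_2}{\sqrt{\lambda_4}}=-\tfrac{E_{\star\star}}{\sqrt2}$, the argument of $\Phi$ in~\eqref{critical:eqNcku} becomes $\sqrt{d+1}\,a(\tilde L_{k+1})$ with $a(x):=\sqrt{\tfrac{\lambda_4}{\lambda_4-3\lambda_2^2}}\big(v+\tfrac{E_{\star\star}}{\sqrt2}x\big)$, while $\exp(-L_{k+1}^2/2)=\exp\!\big(-(d+1)\tilde L_{k+1}^2/2\big)$. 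Since $0<\Phi(y)\le\exp\!\big(-\tfrac12(\max(-y,0))^2\big)$ for every $y$, and $\Phi(\sqrt{d+1}\,a)\ge c\,d^{-1/2}\exp\!\big(-\tfrac{d+1}{2}(\max(-a,0))^2\big)$ uniformly for $a$ in any fixed compact set, the factor $\Phi[\sqrt{d+1}\,a(\tilde L_{k+1})]$ has the same exponential order as $\exp\!\big(-(d+1)\psi(a(\tilde L_{k+1}))\big)$, where $\psi(a):=\tfrac12 a^2\,\indicator_{a<0}$. Then Varadhan's lemma applied to Lemma~\ref{critical:LDP} with the functional $x\mapsto-\tfrac12 x^2-\psi(a(x))$, which is bounded above so that no integrability hypothesis is needed, gives $\tfrac1{d+1}\log\E\!\big(e^{-L_{k+1}^2/2}\,\Phi[\sqrt{d+1}\,a(\tilde L_{k+1})]\big)\to-\inf_x h_k(x)$ with $h_k(x):=\tfrac12 x^2+\psi(a(x))+J_k(x)$. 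Dropping the $\Phi$-factor gives $\tfrac1{d+1}\log\E(e^{-L_{k+1}^2/2})\to-\inf_x(\tfrac12 x^2+J_k(x))$, and since $J_k\equiv+\infty$ outside $(-\infty,-\sqrt2]$ while $\tfrac{d}{dx}\big(\tfrac12 x^2+J_k(x)\big)=x-(k+1)\sqrt{x^2-2}<0$ there, this infimum is attained at the boundary $x=-\sqrt2$ and equals $1$; together with the prefactor this yields $\tfrac1d\log\E(\crit^k(T_d))\to V+\tfrac12\log(\lambda_4/(3\lambda_2))-1=\Theta^\star(V)$.

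It then remains to evaluate $g_k(v):=\inf_x h_k(x)$, and this is where the three regimes emerge. I would work on $(-\infty,-\sqrt2]$, set $q(x):=\tfrac12 x^2+\tfrac12 a(x)^2$ --- a strictly convex quadratic with vertex $x^\ast=\tfrac{\sqrt6\lambda_2\sqrt{\lambda_4}}{\lambda_4+3\lambda_2^2}\,v$ --- and record the elementary identities $x^\ast\le-\sqrt2\iff v\le E_\star$, $a(-\sqrt2)\ge0\iff v\ge E_{\star\star}$, $\tfrac{\lambda_4}{2(\lambda_4-3\lambda_2^2)}=-\tfrac1{E_{\star\star}(E_{\star\star}-E_\star)}$ (so $\tfrac12 a(x)^2=-\tfrac{(v+E_{\star\star}x/\sqrt2)^2}{E_{\star\star}(E_{\star\star}-E_\star)}$), and $E_\star\le E_{\star\star}<0$ (the last from $\lambda_4\ge 3\lambda_2^2$). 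Three cases follow. If $v\ge E_{\star\star}$ then $a\ge0$ on the whole half-line, $\psi\circ a\equiv0$, and $g_k(v)=\inf_x(\tfrac12 x^2+J_k(x))=1$. If $E_\star\le v\le E_{\star\star}$ then on the sub-interval of $(-\infty,-\sqrt2]$ where $a<0$ (which ends at $-\sqrt2$) both $q$ and $J_k$ are non-increasing and $J_k(-\sqrt2)=0$, while on the complement $\tfrac12 x^2+J_k$ is non-increasing up to $-\sqrt2$, so the minimiser is $x=-\sqrt2$ and $g_k(v)=q(-\sqrt2)=1+\tfrac12 a(-\sqrt2)^2=1-\tfrac{(v-E_{\star\star})^2}{E_{\star\star}(E_{\star\star}-E_\star)}$, independent of $k$. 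If $v\le E_\star$ the minimiser is an interior point $x_{k,v}\in(x^\ast,-\sqrt2)$, where $a(x_{k,v})<0$, determined by $h_k'(x)=0$, i.e.\ $q'(x)=(k+1)\sqrt{x^2-2}$; squaring turns this into a quadratic in $x$ whose root with $q'\ge0$ is~\eqref{def:x}, or~\eqref{def:xbix} when the quadratic's leading coefficient vanishes, and $g_k(v)=h_k(x_{k,v})=\tfrac12 x_{k,v}^2-\tfrac{(v+E_{\star\star}x_{k,v}/\sqrt2)^2}{E_{\star\star}(E_{\star\star}-E_\star)}+J_k(x_{k,v})$. Adding $V+\tfrac12\log(\lambda_4/(3\lambda_2))$ to $-g_k(v)$ reproduces $\Theta^\star_k(V,v)$ line by line.

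The hard part will be the Laplace step for the level-constrained count: the factor $\Phi(\sqrt{d+1}\,a(\tilde L_{k+1}))$ carries its own exponential scale instead of being bounded below away from $0$, so replacing it by $\exp(-(d+1)\psi(a(\tilde L_{k+1})))$ must be justified uniformly enough to commute the limit with the expectation. I would do this with the two-sided Gaussian tail bounds above --- the upper bound is global, and for the large deviations lower bound only the lower tail bound on a small neighbourhood of the minimiser is needed (the boundary value $v=E_{\star\star}$, at which $a$ may vanish there, being recovered from the continuity of $v\mapsto\Theta^\star_k(V,v)$) --- together with the observation that $-\tfrac12 x^2-\psi(a(x))$ is bounded above, so Varadhan's lemma applies without a moment condition. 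The remaining work --- verifying the identities linking $E_\star,E_{\star\star}$ to $\lambda_2,\lambda_4$, picking the correct root of the squared stationarity equation, and checking that the minimiser sits at $x=-\sqrt2$ in the first two regimes --- is elementary calculus, but the bookkeeping of the three cases needs care.
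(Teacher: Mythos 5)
Your proposal is correct and follows essentially the same route as the paper: the exact Kac--Rice formula of Proposition~\ref{critical:p:NPC}, the prefactor computation from~\eqref{critical:condition_S}, two-sided Gaussian bounds sandwiching the $\Phi$-factor between multiples of $e^{-(d+1)\psi(a(\tilde L_{k+1}))}$, Varadhan's lemma via Lemma~\ref{critical:LDP}, and the same three-case optimisation with the stationarity equation $q'(x)=(k+1)\sqrt{x^2-2}$. The only (harmless) difference is cosmetic: you fold the two level regimes into a single continuous functional $\psi(a)=\tfrac12 a^2\indicator_{a<0}$, whereas the paper splits into $v\ge E_{\star\star}$ and $v<E_{\star\star}$ before applying Varadhan and uses an explicit Mills-ratio minorant $f$ for the lower bound.
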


\begin{proof}[Proof of Theorem \ref{t:gege:plat:general}]
By Proposition \ref{critical:p:NPC}, setting $\displaystyle g(x):=-x^2/2$, we have
\begin{multline}\label{firstequation}
\lim_{d\rightarrow +\infty}\frac{1}{(d+1)}\log \E\left(\crit_{u \downarrow }^k(T_d)\right)=\\
V+\frac{1}{2}\log \left(\frac{\lambda_4}{3\lambda_2}\right)
+\lim_{d\rightarrow +\infty}\frac{1}{(d+1)}\log \E\left(\Phi \left(\sqrt{\frac{(d+1)\lambda_4}{\lambda_4-3\lambda_2^2}}
\left(v-\tilde{L}_{k+1}\frac{\sqrt{6}\lambda_2}{\sqrt{\lambda_4}}\right) \right)e^{(d+1)g\left(\tilde{L}_{k+1}\right)}\right).
\end{multline}
Moreover
$$
{E}_{\star \star}-{E}_{\star}=\frac{(\lambda_4-3\lambda_2^2)}{\lambda_2\sqrt{3\lambda_4}}.
$$
Then, setting $\displaystyle \kappa:=2/({E}_{\star \star}({E}_{\star}-{E}_{\star \star}))$,
$$
\Phi \left(\sqrt{\frac{(d+1)\lambda_4}{\lambda_4-3\lambda_2^2}}
\left(v-\tilde{L}_{k+1}\frac{\sqrt{6}\lambda_2}{\sqrt{\lambda_4}}\right) \right)e^{(d+1)g\left(\tilde{L}_{k+1}\right)}=
\Phi \left(\sqrt{(d+1)\kappa}\left(v+\tilde{L}_{k+1}\frac{E_{\star \star}}{\sqrt{2}}\right) \right)e^{(d+1)g\left(\tilde{L}_{k+1}\right)}.
$$
First case: $\displaystyle v\geq {E}_{\star \star}$. a) We have
\begin{equation}\label{firstppequation}
\frac{1}{d+1}\log \E\left(\Phi \left(\sqrt{(d+1)\kappa}\left(v+\tilde{L}_{k+1}\frac{E_{\star \star}}{\sqrt{2}}\right) \right)e^{(d+1)g\left(\tilde{L}_{k+1}\right)}\right)\leq \frac{1}{d+1}\log \E\left(e^{(d+1)g\left(\tilde{L}_{k+1}\right)}\right).
\end{equation}
Now we use Varadhan's Integral Lemma \cite[Theorem 4.3.1]{dembo}. Since, by Lemma \ref{critical:LDP}, $\tilde{L}_{k+1}$ satisfies the LDP with good rate funtion $J_k$ and since $g(x)\leq 0$, condition (4.3.3) in \cite{dembo} holds and
\begin{equation}\label{firstpequation}
\lim_{d\rightarrow +\infty} \frac{1}{d+1}\log \E\left(e^{(d+1)g\left(\tilde{L}_{k+1}\right)}\right)=\sup_{x\in \R} (g(x)-J_k(x))=-1.
\end{equation} 
b) We have
$$
\E\left(\Phi \left(\sqrt{(d+1)\kappa}\left(v+\tilde{L}_{k+1}\frac{E_{\star \star}}{\sqrt{2}}\right) \right)e^{(d+1)g\left(\tilde{L}_{k+1}\right)}\right)\geq \E\left(0.5e^{(d+1)g\left(\tilde{L}_{k+1}\right)} \indicator_{\tilde{L}_{k+1}\leq -\frac{\sqrt{2} v}{E_{\star \star}}}\right),
$$
then
\begin{multline}\label{secondequation}
\lim_{d\rightarrow +\infty} \frac{1}{d+1}\log \E\left(\Phi \left(\sqrt{(d+1)\kappa}\left(v+\tilde{L}_{k+1}\frac{E_{\star \star}}{\sqrt{2}}\right) \right)e^{(d+1)g\left(\tilde{L}_{k+1}\right)}\right) \geq \\
\lim_{d\rightarrow +\infty} \frac{1}{d+1}\log \E\left(e^{(d+1)g\left(\tilde{L}_{k+1}\right)} \indicator_{\tilde{L}_{k+1}\leq -\frac{\sqrt{2} v}{E_{\star \star}}}\right).
\end{multline}
Since condition (4.3.3) in \cite{dembo} holds, by \cite[Lemma 4.3.8]{dembo} the tail condition (4.3.2) in \cite{dembo} also holds and by  \cite[exercise 4.3.11]{dembo}
\begin{equation}\label{thirdequation}
\lim_{d\rightarrow +\infty} \frac{1}{d+1}\log \E\left(e^{(d+1)g\left(\tilde{L}_{k+1}\right)} \indicator_{x\leq -\frac{\sqrt{2} v}{E_{\star \star}}}\right)=\sup_{x\leq -\frac{\sqrt{2} v}{E_{\star \star}}} (g(x)-J_k(x))=-1.
\end{equation}
By \eqref{firstequation}, \eqref{firstppequation}, \eqref{firstpequation}, \eqref{secondequation}, \eqref{thirdequation}, we conclude that
$$
\lim_{d\rightarrow +\infty}\frac{1}{(d+1)}\log \E\left(\crit_{u \downarrow }^k(T_d)\right)=
V+\frac{1}{2}\log \left(\frac{\lambda_4}{3\lambda_2}\right)-1.
$$
This gives the first line of ${\Theta}^{\star}_k(V,v)$ in \eqref{def:thetastar} and the expression of $\Theta^{\star}(V)$ in Theorem \ref{t:gege:plat:general}.\\
\\
\noindent Second case $\displaystyle v< {E}_{\star \star}$. a) For $\displaystyle -\frac{v\sqrt{2}}{E_{\star \star}}\leq x \leq -\sqrt{2}$, there exists a constant $C:=\sqrt{\pi/2}$ such that
$$
\Phi \left(\sqrt{(d+1)\kappa}\left(v+x\frac{E_{\star \star}}{\sqrt{2}}\right) \right) \leq C\varphi \left(\sqrt{(d+1)\kappa}\left(v+x\frac{E_{\star \star}}{\sqrt{2}}\right) \right),
$$
where $\varphi$ is the pdf of a standard Gaussian variable. We deduce
$$
\E\left(\Phi \left(\sqrt{(d+1)\kappa}\left(v+\tilde{L}_{k+1}\frac{E_{\star \star}}{\sqrt{2}}\right) \right)e^{(d+1)g\left(\tilde{L}_{k+1}\right)}\right)\leq \E\left(e^{(d+1)\tilde{g}\left(\tilde{L}_{k+1}\right)}\right),
$$
where
$$
\tilde{g}(x)=-\frac{x^2}{2}
-\frac{\kappa}{2}\left(v+x\frac{{E}_{\star \star}}{\sqrt{2}}\right)^2
\indicator_{\{ -\frac{v\sqrt{2}}{{E}_{\star \star}}\leq x \leq -\sqrt{2}\}}.
$$
Therefore
\begin{multline}\label{majoration}
\lim_{d\rightarrow +\infty}\frac{1}{(d+1)}\log \E\left(\Phi \left(\sqrt{(d+1)\kappa}\left(v+\tilde{L}_{k+1}\frac{E_{\star \star}}{\sqrt{2}}\right) \right)e^{(d+1)g\left(\tilde{L}_{k+1}\right)}\right)\leq \\
\lim_{d\rightarrow +\infty}\frac{1}{(d+1)}\log \E\left(e^{(d+1)\tilde{g}\left(\tilde{L}_{k+1}\right)}\right).
\end{multline}
Since $\tilde{g}(x)\leq 0$, condition (4.3.3) in \cite{dembo} holds and by \cite[Theorem 4.3.1]{dembo}, the r.h.s. of \eqref{majoration} is equal to 
$$
\sup_{x\in \R}(\tilde{g}(x)-J_k(x)).
$$
b) For $x\leq 0$, we have $\displaystyle \Phi(x)/\varphi(x) \geq f(x)$ where $\displaystyle f(x):=(-1/x +1/x^3)\indicator_{x\leq -2}+(\Phi(-2)/\varphi(-2))\indicator_{-2\leq x\leq 0}$ is an increasing function. Therefore
\begin{multline*}
\E\left(\Phi \left(\sqrt{(d+1)\kappa}\left(v+\tilde{L}_{k+1}\frac{E_{\star \star}}{\sqrt{2}}\right) \right)e^{(d+1)g\left(\tilde{L}_{k+1}\right)}\right)\geq \\
 \E \left(\frac{f\left(\sqrt{(d+1)\kappa}\left(v-E_{\star \star}\right)\right)}{\sqrt{2\pi}}e^{(d+1)\tilde{g}\left(\tilde{L}_{k+1}\right)}\indicator_{ -\frac{v\sqrt{2} }{E_{\star \star}}\leq \tilde{L}_{k+1}\leq -\sqrt{2}}\right).
\end{multline*}
We deduce
\begin{multline}\label{minoration}
\lim_{d\rightarrow +\infty}\frac{1}{(d+1)}\log \E\left(\Phi \left(\sqrt{(d+1)\kappa}\left(v+\tilde{L}_{k+1}\frac{E_{\star \star}}{\sqrt{2}}\right) \right)e^{(d+1)g\left(\tilde{L}_{k+1}\right)}\right)\geq \\
\lim_{d\rightarrow +\infty}\frac{1}{(d+1)}\log \E\left(e^{(d+1)\tilde{g}\left(\tilde{L}_{k+1}\right)}\indicator_{ -\frac{v\sqrt{2} }{E_{\star \star}}\leq \tilde{L}_{k+1}\leq -\sqrt{2}}\right).
\end{multline}
Since condition (4.3.3) in \cite{dembo} holds, by \cite[Lemma 4.3.8]{dembo} the tail condition (4.3.2) in \cite{dembo} also holds and by  \cite[exercise 4.3.11]{dembo}, the r.h.s. of \eqref{minoration} is equal to 
$$
\sup_{-\frac{v\sqrt{2}}{E_{\star \star}}\leq x \leq -\sqrt{2}}(\tilde{g}(x)-J_k(x)).
$$
It is easy to check that
$$
\sup_{x\in \R}(\tilde{g}(x)-J_k(x))=\sup_{-\frac{v\sqrt{2}}{E_{\star \star}}\leq x \leq -\sqrt{2}}(\tilde{g}(x)-J_k(x)).
$$
In the case $E_{\star}< v < E_{\star \star}$, we can check that $\tilde{g}'(x)>0$ and $-J'_k(x)>0$ $\forall x \in [-\frac{v\sqrt{2}}{E_{\star \star}};-\sqrt{2}]$.
Thus the maximum of $\tilde{g}(x)-J_k(x)$ is obtained for $x=-\sqrt{2}$. This gives the second line of ${\Theta}^{\star}_k(V,v)$ in \eqref{def:thetastar}.\\
\\
When $v<{E}_{\star}$ we look for the maximum of $\displaystyle \tilde{g}(x)-J_k(x)$ on $[-\frac{v\sqrt{2}}{{E}_{\star \star}};-\sqrt{2}]$.
We have $\tilde{g}'(x)>0$ and $-J'_k(x)>0$ $\forall x \in [-\frac{v\sqrt{2}}{E_{\star \star}};-\frac{v\sqrt{2}}{E_{\star}}]$.
Moreover $\tilde{g}'(-\sqrt{2})-J'_k(-\sqrt{2})<0$. So the maximum is reached on $[-\frac{v\sqrt{2}}{{E}_{\star}};-\sqrt{2}]$
\begin{equation}\label{eq:maximum}
\tilde{g}'(x)-J'_k(x)=0 \Leftrightarrow (k+1)\sqrt{x^2-2}=-\frac{(x{E}_{\star}+\sqrt{2}v)}{({E}_{\star \star}-{E}_{\star})}.
\end{equation}
This implies that
$$
\left(\frac{{E}_{\star}^2}{({E}_{\star \star}-{E}_{\star})^2}-(k+1)^2\right)x^2+\frac{2\sqrt{2}v{E}_{\star}}{({E}_{\star \star}-{E}_{\star})^2}x+2(k+1)^2+\frac{2v^2}{({E}_{\star \star}-{E}_{\star})^2}=0.
$$
When $E_{\star}^2-(k+1)^2(E_{\star \star}-E_{\star})^2 \neq 0$, this equation admits two roots
$$
x_{\underset{-}{+}}=\frac{-\sqrt{2}v{E}_{\star}\underset{-}{+}\sqrt{2}(k+1)({E}_{\star \star}-{E}_{\star})\sqrt{(k+1)^2({E}_{\star \star}-{E}_{\star})^2+(v^2-{E}_{\star}^2)}}{{E}_{\star}^2-(k+1)^2({E}_{\star \star}-{E}_{\star})^2}.
$$
When ${E}_{\star}^2-(k+1)^2({E}_{\star \star}-{E}_{\star})^2<0$, we have $x_->0$. The only suitable root is $x_+$.
When ${E}_{\star}^2-(k+1)^2({E}_{\star \star}-{E}_{\star})^2>0$, we can check that $\displaystyle x_-<-\sqrt{2}v/E_{\star}$. The only
suitable root is $x_+$. \\
When $E_{\star}^2-(k+1)^2(E_{\star \star}-E_{\star})^2 = 0$, the root is
$$
x:=-\frac{E_{\star}^2+v^2}{\sqrt{2}vE_{\star}}.
$$
That concludes the proof. 
\end{proof}

\begin{remark}
We can define two critical values, $V_c^1$ and $V_c^2$ such that
\begin{equation}\label{eq:volumec1}
\Theta^{\star}(V_c^1)=\Theta^{\star}_k(V_c^1,E_{\star \star})=0 \Leftrightarrow V_c^1:=\log \left(\frac{\exp(1) \sqrt{3\lambda_2}}{\sqrt{\lambda_4}}\right)
\end{equation}
and
\begin{equation}\label{eq:volumec2}
\Theta^{\star}_k(V_c^2,E_{\star})=0 \Leftrightarrow V_c^2:=\log \left(\frac{\exp \left(E_{\star}/E_{\star \star}\right)\sqrt{3\lambda_2}}{\sqrt{\lambda_4}}\right).
\end{equation}
When the volume of $T_d$ is such that $V<V_c^1$ then $\Theta^{\star}(V)<0$; therefore the mean number of critical points with index $k$ on $T_d$ decreases exponentially with $d$.
When $V>V_c^1$, this mean number increases exponentially with $d$.
\end{remark}

\begin{lemma}\label{lemma:function_theta}
We have the following properties.
\begin{itemize}
\item[a.] $E_{\star}<-2< E_{\star \star} <0$.
\item[b.] For $V$ and $k$ fixed, $\Theta^{\star}_k(V,v)$ is a constant for $v$ above the level $E_{\star \star}$, a strictly increasing function of $v$ not depending on $k$ for $E_{\star}\leq v<E_{\star \star}$
and a strictly increasing function of $v$ depending on $k$ below $E_{\star}$.
\item[c.] For $V$ and $v<E_{\star}$ fixed, $\Theta^{\star}_k(V,v)$ is a strictly decreasing function of $k$.
\end{itemize}
\end{lemma}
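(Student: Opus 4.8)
My plan is to treat the three items in turn. For $v<E_{\star\star}$ I will use the representation $\Theta^\star_k(V,v)=\gamma+\sup_{x\in\R}\big(\tilde g(x)-J_k(x)\big)$ from the proof of Theorem~\ref{t:gege:plat:general}, where $\gamma:=V+\tfrac12\log(\lambda_4/(3\lambda_2))$ and $\tilde g$, $\kappa=2/\big(E_{\star\star}(E_\star-E_{\star\star})\big)>0$ are as defined there, together with the localization of the maximizer obtained there; for $v\ge E_{\star\star}$ I will use the first line of \eqref{def:thetastar} directly. For item~(a) I would invoke only the strict inequality $\lambda_4>3\lambda_2^2$ (assumed throughout the section) and \eqref{def:E}: since $E_{\star\star}^2=12\lambda_2^2/\lambda_4<12\lambda_2^2/(3\lambda_2^2)=4$ and $E_{\star\star}<0$, one has $-2<E_{\star\star}<0$; and since the arithmetic--geometric mean inequality gives $(\lambda_4+3\lambda_2^2)^2\ge 12\lambda_2^2\lambda_4$, with equality only when $\lambda_4=3\lambda_2^2$, which is excluded here, one has $E_\star^2=(\lambda_4+3\lambda_2^2)^2/(3\lambda_2^2\lambda_4)>4$, so $E_\star<-2$. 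This is routine.

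For item~(b), when $v\ge E_{\star\star}$ the first line of \eqref{def:thetastar} is constant in $v$ (and in $k$). When $E_\star\le v<E_{\star\star}$, the second line reads $\gamma-1+(v-E_{\star\star})^2/\big(E_{\star\star}(E_{\star\star}-E_\star)\big)$; by~(a) the denominator $E_{\star\star}(E_{\star\star}-E_\star)$ is negative, so, as $v-E_{\star\star}<0$, its $v$-derivative $2(v-E_{\star\star})/\big(E_{\star\star}(E_{\star\star}-E_\star)\big)$ is strictly positive, and it does not involve $k$. When $v<E_\star$, I would differentiate the third line and appeal to the envelope principle: by the proof of Theorem~\ref{t:gege:plat:general} the supremum is attained at an interior critical point $x_{k,v}$ of $x\mapsto\tilde g(x)-J_k(x)$ with $-\sqrt2\,v/E_\star<x_{k,v}<-\sqrt2$, so the $\mathrm dx_{k,v}/\mathrm dv$ terms cancel and $\tfrac{\mathrm d}{\mathrm dv}\Theta^\star_k(V,v)=\partial_v\tilde g(x_{k,v})=-\kappa\big(v+x_{k,v}E_{\star\star}/\sqrt2\big)$. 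It then suffices to note that $x\mapsto v+xE_{\star\star}/\sqrt2$ is affine and negative on $[-\sqrt2\,v/E_\star,-\sqrt2]$: it equals $v-E_{\star\star}<0$ at $x=-\sqrt2$ and $v(E_\star-E_{\star\star})/E_\star<0$ at $x=-\sqrt2\,v/E_\star$, both by~(a) and $v<E_\star<0$. Hence $\tfrac{\mathrm d}{\mathrm dv}\Theta^\star_k(V,v)>0$.

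For item~(c), fix $V$ and $v<E_\star$. Since $J_k(x)=(k+1)I(-x)$ with $I(-x)\ge0$, and $I(-x)>0$ exactly when $x<-\sqrt2$, the value $J_k(x)$ is strictly increasing in $k$ at every such $x$. By the proof of Theorem~\ref{t:gege:plat:general} the maximizer $x_{k+1,v}$ of $x\mapsto\tilde g(x)-J_{k+1}(x)$ satisfies $x_{k+1,v}<-\sqrt2$, so $I(-x_{k+1,v})>0$ and
\[
\Theta^\star_{k+1}(V,v)-\gamma=\tilde g(x_{k+1,v})-J_{k+1}(x_{k+1,v})<\tilde g(x_{k+1,v})-J_k(x_{k+1,v})\le\Theta^\star_k(V,v)-\gamma,
\]
which yields the strict decrease in $k$. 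The main obstacle is pinning down the maximizer in item~(b) for $v<E_\star$ precisely enough to sign $v+x_{k,v}E_{\star\star}/\sqrt2$; as that localization was already carried out inside the proof of Theorem~\ref{t:gege:plat:general}, the remaining work is elementary inequality manipulation resting on~(a).
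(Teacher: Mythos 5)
Your proposal is correct, and for items (a) and (b) it follows essentially the paper's route: (a) is the same elementary computation (the paper leaves it as ``easy to check''; your AM--GM argument for $E_{\star}^2>4$ is a clean way to fill it in), and for $v<E_{\star}$ in (b) you differentiate the third line of \eqref{def:thetastar} and use the first-order condition \eqref{eq:maximum} to cancel the $\partial x_{k,v}/\partial v$ terms, arriving at $\tfrac{\partial}{\partial v}\Theta^{\star}_k(V,v)=-\kappa\bigl(v+x_{k,v}E_{\star\star}/\sqrt2\bigr)$, which is exactly the paper's expression $\sqrt2\,(\sqrt2 v+x_{k,v}E_{\star\star})/\bigl(E_{\star\star}(E_{\star\star}-E_{\star})\bigr)$ in different notation; the sign is then obtained from the same localization $x_{k,v}>-\sqrt2\,v/E_{\star\star}$ established at the end of the proof of Theorem~\ref{t:gege:plat:general} (you check both endpoints of the smaller interval, the paper uses only the left endpoint of the larger one — equivalent). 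Where you genuinely diverge is item (c): the paper extends $k$ to real values, differentiates $\Theta^{\star}_k$ in $k$, and uses \eqref{eq:maximum} to reduce the derivative to $-I(-x_{k,v})<0$, whereas you compare the two suprema directly via $\sup(\tilde g-J_{k+1})\le\sup(\tilde g-J_k)-I(-x_{k+1,v})$, using only $J_{k+1}-J_k=I(-\cdot)>0$ at the maximizer $x_{k+1,v}<-\sqrt2$. Your version is more elementary (no need to make sense of $\partial x_{k,v}/\partial k$ or of a real-$k$ extension) and suffices for the integer-$k$ monotonicity claimed in the lemma; the paper's version gives in addition the exact rate of decrease in $k$. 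The one point worth making explicit in your write-up is that the maximizer is \emph{strictly} below $-\sqrt2$ (which follows from $\tilde g'(-\sqrt2)-J_k'(-\sqrt2)<0$ in the proof of the theorem), since you need $I(-x_{k+1,v})>0$ strictly; with that noted, the argument is complete.
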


\begin{proof}[Proof of Lemma \ref{lemma:function_theta}]
Let us prove that $\Theta^{\star}_k(V,\cdot)$ is strictly increasing on $(-\infty,E_{\star})$.
$$
\frac{\partial \Theta_k^{\star}(V,v)}{\partial v}=-x_{k,v}\frac{\partial x_{k,v}}{\partial v}+\frac{(\sqrt{2}v+x_{k,v}E_{\star \star})(\sqrt{2}+E_{\star \star}\partial x_{k,v} / \partial v)}{E_{\star \star}(E_{\star \star} -E_{\star})}-
J'_k(x_{k,v})\frac{\partial x_{k,v}}{\partial v}.
$$
By \eqref{eq:maximum} we have
$$
-J'_k(x_{k,v})=(k+1)\sqrt{x_{k,v}^2-2}=-\frac{(x_{k,v}E_{\star}+\sqrt{2}v)}{(E_{\star \star} -E_{\star})}.
$$
We deduce, after calculations
$$
\frac{\partial \Theta_k^{\star}(V,v)}{\partial v}=\sqrt{2}\frac{(\sqrt{2}v+x_{k,v}E_{\star \star})}{E_{\star \star}(E_{\star \star} -E_{\star})},
$$
Since $\displaystyle x_{k,v}>-\frac{v\sqrt{2}}{E_{\star \star}}$ (see the end of the proof of Theorem \ref{t:gege:plat:general}), we deduce the strict positivity of $\frac{\partial \Theta_k^{\star}(V,v)}{\partial v}$
for $v\leq E_{\star}$. The other elements of items a. and b. are easy to check.\\
Let us prove item c. Functions $J_k$, $x_{k,v}$ and $\Theta_k^{\star}$ as functions of $k$ can be extended on real numbers. So we can derive with respect to $k$. By direct calculations we have
$$
\frac{\partial \Theta^{\star}_k(V,v)}{\partial k}=\frac{\partial x_{k,v}}{\partial k}\left(\frac{x_{k,v}E_{\star}+\sqrt{2}v+(k+1)(E_{\star \star}-E_{\star})\sqrt{x_{k,v}^2-2}}{E_{\star \star}-E_{\star}}\right)-I(-x_{k,v}).
$$
By \eqref{eq:maximum} we obtain
$$
\frac{\partial \Theta^{\star}_k(V,v)}{\partial k}=-I(-x_{k,v})<0.
$$
\end{proof}

\begin{remark}
\begin{itemize}
\item[1.] $\Theta^{\star}_k(V,v)$ is a strictly increasing function of $V$ for $v$ and $k$ fixed. Therefore, because of \eqref{eq:volumec1} and \eqref{eq:volumec2}, for $V_c^1<V<V_c^2$, we have $\Theta^{\star}_k(V,E_{\star \star})>0$
and $\Theta^{\star}_k(V,E_{\star})<0$. Since $\Theta^{\star}_k(V,v)$ is an strictly increasing function of $v$ for $E_{\star}<v<E_{\star \star}$ and $V$ and $k$ fixed, there exists a critical level $v_c$ such that $\Theta^{\star}_k(V,v_c)=0$. It is easy to check that
\begin{equation}\label{eq:vc}
v_c:=E_{\star \star}-\sqrt{\left(V_c^1-V\right)E_{\star \star}(E_{\star \star}-E_{\star})}.
\end{equation}
When $v>v_c$, the mean number of critical points with index $k$ below the level $u=v\sqrt{d+1}$ increases exponentially with $d$. It decreases exponentially with $d$ when $v<v_c$ independently of $k$.
\item[2.] In the same way, we can check that for $V>V_c^2>V_c^1$ we have $\displaystyle \Theta^{\star}_k(V,E_{\star})>0$ and there exist critical values $v_c^0<v_c^1<v_c^2<\ldots<E_{\star}$ such that
$$
\Theta^{\star}_k(V,v_c^k)=0.
$$
We also have
$$
\Theta^{\star}_k(V,v)<0 \mbox{ for }v<v_c^k \mbox{ and }\Theta^{\star}_k(V,v)>0 \mbox{ for }v>v_c^k.
$$
Then the mean number
of critical points with index $k$ below the level $u=v\sqrt{d+1}$ decreases exponentially with $d$ when $v<v_c^k$ and increases exponentially with $d$ when $v>v_c^k$.
Therefore there exists a layered structure such that below the level $v_c^0$, there will be approximatively no critical points, between
$v_c^0$ and $v_c^1$, only critical points with index 0, between $v_c^1$ and $v_c^2$, only critical points with indexes 0 and 1 and so on.
\end{itemize}
\end{remark}

\begin{remark}
When $\displaystyle \lambda_4-3\lambda_2^2 = 0$, we have $\displaystyle  \E(X(0),X(t))) =
{\bf{r}}(\|s-t\|^2)=\exp \left(-a \|s-t \|^2\right), \mbox{ }a>0$. This random field is called the "Bargmann-Fock field". We have  $E_{\star}=E_{\star \star}=-2$
and only two regimes coexist. We define the critical value $\displaystyle V_c:=1-\log \left({\sqrt{\lambda_2}}\right)$. When $V<V_c$ the mean number of critical points with index $k$ decreases exponentially with $d$
whereas when $V>V_c$ it increases exponentially with $d$. In that case there also exist critical values $v_c^0<v_c^1<v_c^2<\ldots <-2$ such that $\displaystyle {\Theta}^{\star}_k(V,v_c^k)=0$
and we retrieve the layered structure as in the general case above.
\end{remark}

Figure \ref{critical:Matern4l1V3} (c) displays the function ${\Theta}^{\star}_k(V,v)$ for the Bargmann-Fock field when $a=5$. In this case $V_c\simeq -0.15$ and we took $V=0$ to illustrate the regime $V>V_c$. 

\subsection{Example: the Mat\'ern Gaussian field}
In this section $X(\cdot)$ is a centered Mat\'ern Gaussian random field with covariance function satisfying
$$
\E (X(s)X(t))=c(s,t)={\bf{r}}(\|s-t\|^2):=\frac{\sigma^2}{2^{\nu-1}\Gamma(\nu)}\left(\frac{\sqrt{2\nu \|s-t\|^2}}{\ell}\right)^{\nu}K_{\nu}\left(\frac{\sqrt{2\nu \|s-t\|^2}}{\ell}\right),
$$
where $K_{\nu}$ is the modified Bessel function of the second kind. This covariance function has three positive parameters $\sigma^2$, $\ell$ and $\nu$ controlling respectively
the variance, the spatial range and the smoothness. We take $\sigma^2:=1$. We assume $\nu>2$.
We have (see Proposition 2.1 in \cite{cheng})
$$
{\bf{r}}'(0)=-\frac{\nu}{2(\nu-1)\ell^2} \hspace{0.5cm} \mbox{ and }\hspace{0.5cm}{\bf{r}}''(0)=\frac{\nu^2}{4(\nu-1)(\nu-2)\ell^4}.
$$
By \eqref{critical:spectralmoments} we deduce
$$
\lambda_2=\frac{\nu}{(\nu-1)\ell^2}\hspace{0.5cm} \mbox{ , }\hspace{0.5cm}\lambda_4=\frac{3\nu^2}{(\nu-1)(\nu-2)\ell^4}\hspace{0.5cm} \mbox{ and }\hspace{0.5cm}\lambda_4-3\lambda_2^2=\frac{3\nu^2}{(\nu-1)^2(\nu-2)\ell^4}>0.
$$
Figure \ref{critical:Matern4l1V3} [cases (a) and (b)] displays the function $\Theta^{\star}_k(V,v)$
when $\nu=4$ and $\ell=1$ for two different values of $V$. In that case $\lambda_2=4/3$, $\lambda_4=8$,
$E_{\star}\simeq-2.04$, $E_{\star \star}\simeq -1.63$
and $V_c^1\simeq 0.65$, $V_c^2\simeq 0.90$. So we took $V=0.75$ (a) and $V=3$ (b) to illustrate the regimes
$V_c^1<V<V_c^2$ and $V>V_c^2$.

\begin{figure}[h!]
\begin{center}
\begin{tikzpicture}[scale=0.5]
\begin{axis}[xmin=-3, xmax=-1.5, xtick={-3,-2.5}, ytick={-2,-1.5,-1,-0.5}, ymin=-2,ymax=0.5,xlabel={$v$},ylabel={$\Theta^{\star}_k(V,v)$},title={(a)},yscale=1,xscale=1, extra y ticks={0}, extra y tick style={grid=major},
enlarge x limits=false, extra x ticks={-2.04,-1.89,-1.63}, extra x tick labels={${E}_{\star}$,$v_c$, ${E}_{\star \star}$}, extra x tick style={grid=major}]
 \addplot[mark=none,draw=black] table[x=v,y=theta0]{Matern4l1V3quart.txt};
 \addplot[mark=none,draw=green] table[x=v,y=theta1]{Matern4l1V3quart.txt};
 \addplot[mark=none,draw=red] table[x=v,y=theta100]{Matern4l1V3quart.txt};
  \end{axis}
\end{tikzpicture}
\hspace{0.5cm}
\begin{tikzpicture}[scale=0.5]
\begin{axis}[xmin=-4, xmax=-1.5, xtick={-4,-3.5,-3,-2.5}, ytick={-3, -2,-1,1,2}, ymin=-3,ymax=2.5,xlabel={$v$},ylabel={$\Theta^{\star}_k(V,v)$},title={(b)},yscale=1,xscale=1, extra y ticks={0}, extra y tick style={grid=major},
enlarge x limits=false, extra x ticks={-2.04,-1.63}, extra x tick labels={${E}_{\star}$, ${E}_{\star \star}$}, extra x tick style={grid=major}]
 \addplot[mark=none,draw=black] table[x=v,y=theta0]{Matern4l1V3.txt};
 \addplot[mark=none,draw=green] table[x=v,y=theta1]{Matern4l1V3.txt};
 \addplot[mark=none,draw=red] table[x=v,y=theta100]{Matern4l1V3.txt};
  \end{axis}
\end{tikzpicture}
\hspace{0.5cm}
\begin{tikzpicture}[scale=0.5]
 \begin{axis}[xmin=-2.2, xmax=-1.95, xtick={-2.2,-1.95}, ytick={-0.1, 0.1}, ymin=-0.15,ymax=0.16,xlabel={$v$},ylabel={${\Theta}^{\star}_k(V,v)$},title={(c)},yscale=1,xscale=1, extra y ticks={0}, extra y tick style={grid=major},
enlarge x limits=false, extra x ticks={-2.12, -2.104, -2}, extra x tick labels={$v_0$, $v_1$, $-2$}, extra x tick style={grid=major}]
 \addplot[mark=none,draw=black] table[x=v,y=theta0]{Logarithmic_planar_fct1.txt};
 \addplot[mark=none,draw=green] table[x=v,y=theta1]{Logarithmic_planar_fct1.txt};
 \addplot[mark=none,draw=red] table[x=v,y=theta100]{Logarithmic_planar_fct1.txt};
  \end{axis}
\end{tikzpicture}
\caption[Function $\Theta^{\star}_k(V,v)$]{Cases (a) and (b): Function $\Theta^{\star}_k(V,v)$ for $V=3/4$ (a) and $V=3$ (b), ${\bf{r}}(\|s-t\|^2)$
 being the Mat\'ern covariance function with $\nu=4$ and $\ell=1$, $k=0$ in black, $k=1$ in green and $k=100$ in red. Case (c): Function $\Theta^{\star}_k(V,v)$  for ${\bf{r}}(\|s-t\|^2)=\exp \left(-5\|s-t\|^2\right)$, $V=0$, $k=0$ in black, $k=1$ in green and $k=100$ in red.  } \label{critical:Matern4l1V3}
\end{center}
\end{figure}

\end{document}